    \newcommand\cyr{%
    \renewcommand\rmdefault{wncyr}%
    \renewcommand\sfdefault{wncyss}%
    \renewcommand\encodingdefault{OT2}%
    \normalfont
    \selectfont}
    \DeclareTextFontCommand{\textcyr}{\cyr}
\theoremstyle{plain}
\newtheorem{theorem}[subsection]{{\bf Theorem}}
\newtheorem*{theorem*}{{\bf Theorem}}
\newtheorem{corollary}[subsection]{{\bf Corollary}}
\newtheorem*{corollary*}{{\bf Corollary}}
\newtheorem{lemma}[subsection]{{\bf Lemma}}
\theoremstyle{definition}
\theoremstyle{remark}
\numberwithin{equation}{section}
\begin{document}
\baselineskip=14pt
\title{A note on the order of the Schur multiplier of $p$-groups}
%%%%%%%%%%%%%%%%%%%%%%%%%%%%%%%%%%%%%%%%%%%%%%%%%%%%%
%\author[U. Jezernik]{Urban Jezernik}
%\address[Urban Jezernik]{
%Institute of Mathematics, Physics, and Mechanics \\
%Jadranska 19 \\
%1000 Ljubljana \\
%Slovenia}
%\email{urban.jezernik@imfm.si}
%%%%%%%%%%%%%%%%%%%%%%%%%%%%%%%%%%%%%%%%%%%%%%%%%%%%%
\author[P. K. Rai]{Pradeep K. Rai}
\address[Pradeep K. Rai]{Department of Mathematics, Bar-Ilan University
Ramat Gan \\
Israel}
\email{raipradeepiitb@gmail.com}
%%%%%%%%%%%%%%%%%%%%%%%%%%%%%%%%%%%%%%%%%%%%%%%%%%%%%
\subjclass[2010]{20J99, 20D15}
\keywords{Schur multiplier, finite $p$-group, coclass}
%%%%%%%%%%%%%%%%%%%%%%%%%%%%%%%%%%%%%%%%%%%%%%%%%%%%%
\begin{abstract}
Let $G$ be a finite $p$-group of order $p^n$ with $|G'| = p^k$. Let $M(G)$ denotes the Schur multiplier of $G$. A classical result of Green states that $|M(G)| \leq p^{\frac{1}{2}n(n-1)}$. In 2009, Niroomand, improving Green's and other bounds on $|M(G)|$ for a non-abelain $p$-group $G$, proved that $|M(G)| \leq p^{\frac{1}{2}(n-k-1)(n+k-2)+1}$. In this article we note that a bound, obtained earlier, by Ellis and Weigold is more general than the bound of Niroomand. We derive from the bound of Ellis and Weigold that $|M(G)| \leq p^{\frac{1}{2}(d(G)-1)(n+k-2)+1}$ for a non-abelain $p$-group $G$. Moreover, we sharpen the bound of Ellis and Weigold and as a consequence derive that if $G^{ab}$ %\neq C_{p^e} \times C_{p^e} \times \ldots  \times C_{p^e}$, 
is not homocyclic then $|M(G)| \leq p^{\frac{1}{2}(d(G)-1)(n+k-3)+1}$. We further note an improvement in an old bound given by Vermani. Finally we note, for a $p$-group of coclass $r$,  that $|M(G)| \leq p^{\frac{1}{2}(r^2-r)+kr+1}$. This improves a bound by Moravec.  %Finally we consider the corank $t$ of a group $G$, defined by $|M(G)| = p^{\frac{1}{2}n(n-1)-t}$. The groups with small values of $t$ has been classified by many authors. We note a bound on $n$ in terms of $t$, in particular we note that $n \leq t+2$. % prove for a finite $p$-group $G$ of coclass $r$, $|M(G)| \leq p^{(r^2-r)/2+kr-h_G}$. This significantly improves a bound of Moravec \cite{Moravec1}. We also improve a bound on $|M(G)$ given by Vermani \cite{Vermani2}, if the nilpotency class of $G$ is at least 3.
\end{abstract}
%%%%%%%%%%%%%%%%%%%%%%%%%%%%%%%%%%%%%%%%%%%%%%%%%%%%%
\maketitle
%%%%%%%%%%%%%%%%%%%%%%%%%%%%%%%%%%%%%%%%%%%%%%%%%%%%%
%%%%%%%%%%%%%%%%%%%%%%%%%%%%%%%%%%%%%%%%%%%%%%%%%%%%%
\section{Introduction}

Let $G$  be a group. The center and the commutator subgroup of $G$ are denoted by $Z(G)$, and $\gamma_2(G)$ respectively. By $d(G)$ we denote the minimal no of generators of $G$. We write $\gamma_i(G)$ and $Z_i(G)$ for the $i$-th term in the lower and upper central series of $G$ respectively. Finally, the abelianization of the group $G$, i.e. $G/\gamma_2(G)$, is denoted by $G^{ab}$. 

 %and $x,y \in G$. Then $[x,y]$ denotes the commutator $x^{-1}y^{-1}xy$. By $\gen{x}$ we denote the cyclic subgroup generated by $x$. Let $H$ be a subset of  $G$. Then $\gen{H}$ denotes the subgroup generated by $H$. 
 %The group of all homomorphisms from a group $H$ to a group $K$ is denoted by $Hom(H, K)$. Let $f \in Hom(H,K)$, then $ker(f)$ denotes the kernal of $f$. 

%Let $p$ be a prime number. By $C_{p^n}$ we denote the cyclic group of order $p^n$, and by $C_{p^n}^{(k)}$, the direct product of $k$ coppies of $C_{p^n}$. The subgroup $\gen{x^{p} \mid x \in G}$ is denoted by $G^{p}$. 
%If $H$ is a subgroup (proper subgroup) of $G$, then we write $H \le G$ ($H < G$). %If $H$ is a subset (proper subset) of $G$, then we write $H \subseteq G$ ($H \subset G$). 

%The subgroup of $G$ generated by all the elements of order $p$ is denoted by $\Omega_1(Z(G))$
%, $x^a$ denotes the conjugate of $x$ by $a$, i.e., $a^{-1}xa$, $x^G$ denotes the $G$-conjugacy class of $x$ and $[x, G]$ denotes the set of all $[x,z]$, for $z \in G$. %and $G_{p^i} = \gen{x \in G \mid x^{p^i} = 1}$, where $i \ge 1$ is an integer. 

\vspace{.2cm}

Let $G$ be finite $p$-group of order $p^n$ and let $M(G)$ denotes the Schur multiplier of $G$. In 1956 Green proved that $|M(G)| \leq p^{\frac{1}{2}n(n-1)}$ \cite{Green}. Since then Green's bound has been reproved and generalized by many mathematicians. Weigold, in 1965, gave a bound on $|\gamma_2(G)|$ in terms of $|G/Z(G)|$ and rederived the Green's bound using the existence of representation groups \cite{Weigold1}. In 1967 Gasch\"{u}tz et al., sharpening Green's bound, proved in \cite{Gaschutz} that 
\begin{equation*}
|M(G)| \leq |M(G^{ab})||\gamma_2(G)|^{d(G/Z(G))-1}. \label{bnd_Gtz}
\end{equation*}
 The bound of Gasch\"{u}tz et al. was further generalized by Vermani in 1969 \cite{Vermani1}. He obtained their result as a corollary of the bound 
 \begin{equation*} 
 |M(G)| \leq \bigg{|}M\bigg(\frac{G}{\gamma_c(G)}\bigg)\bigg{|}\bigg{|}Hom\bigg(\frac{G}{Z_{c-1}(G)}, \gamma_c(G)\bigg)\bigg{|}\bigg{/}|\gamma_c(G)|, \label{bnd_vrmni1}
 \end{equation*}
where $c$ is the nilpotency class of $G$. This bound was reproved by Jones using a different method in \cite{Jones1}. In 1969 Green's bound was generalized by Weigold  \cite{Weigold2} when he proved that 
\begin{equation}
|M(G)| \leq p^{\frac{1}{2}(d(G)-1)(2n-d(G))}. \label{bnd_wgld1}
\end{equation} 
In 1972 Jones, generalizing Green's bound, proved in \cite{Jones2} that, if the exponent of the center is $p^{e_{Z(G)}}$, then 
\begin{equation}
|\gamma_2(G)||M(G)| \leq p^{\frac{1}{2}(n-e_{Z(G)})(n+e_{Z(G)}-1)}. \label{bnd_jns}
\end{equation}
 This bound of Jones was further generalised by Vermani in 1974 \cite{Vermani2}. He proved that if the restriction homomorphism from $M(G)$ to $M(K)$, for a central subgroup $K$, is zero (Note that for a cyclic central subgroup $K$, it is zero), then 
 \begin{equation}
|\gamma_2(G)||M(G)| \leq |(G/K)^{ab} \otimes K|p^{\frac{1}{2}(m-r)(m+r-1)}, \label{bnd_vrmni2}
\end{equation} 
where $m$ and $r$ are given by  $|G/K| = p^m$ and $|\gamma_2(G)K/K| = p^r.$

In 1999 Ellis and Weigold, sharpening Weigold's earlier bound \ref{bnd_wgld1}, proved that \begin{equation}
|M(G)| \leq p^{\frac{1}{2}(d(G)-1)(2n-m)} (= p^{\frac{1}{2}(d(G)-1)(n+k)}), \label{bnd_ellis}
\end{equation}
where $m$ and $k$ are given by $|G^{ab}| = p^m, |\gamma_2(G)| = p^k$ \cite{Ellis1}. 
 
%This was reproved by Niroomand and Russo \cite{Niroomand1} using a different method. 

 %Moreover, They claimed to improve the bound when $G^{ab} \neq C_{p^e} \times C_{p^e} \times \ldots  \times C_{p^e}$ proving that 
%\begin{equation}
%|M(G)| \leq  p^{\frac{1}{2}(d_G-1)(n+k-1)} \label{bnd_nrmnd1} 
%\end{equation}
%in this case. 

\vspace{.2cm}

%The computation of $|M(G)|$ for abelian $p$-groups is very well known (see \cite[2.2.12]{Karpilowski}) (also, in Lemma \ref{lem1} below we record a slight generalization of the bound  \ref{bnd_ellis} of Ellis and Weigold for abelian groups. We prove that if $G = C_{p^{\alpha_1}} \times C_{p^{\alpha_2}} \times \cdots \times C_{p^{\alpha_d}} (\alpha_1 \geq \alpha_2 \geq \cdots \geq \alpha_d)$, then 
%\begin{equation}
%|M(G)| \leq p^{\frac{1}{2}(d(G)-1)(n-(\alpha_1-\alpha_d))}).
%\end{equation}
%Therefore, we now focus our investigation on non-abelian finite $p$-groups. In the following theorem we improve the bound \ref{bnd_Gtz} of Gasch\"{u}tz et al. for non-abelian finite $p$-groups.

%\begin{theorem}\label{thm_Gaschutz}
%Let $G$ be a non-abelian finite $p$-group with $d(G/Z(G)) = \bar{d}$ and $d(\gamma_2(G)/\gamma_3(G)) = d_2$. Then 
%\[|M(G)| \leq \frac{|M(G^{ab})||\gamma_2(G)|^{d(G/Z(G))-1}}{p^{\sum\limits_{i= 2}^{min(\bar{d}, d_2+1)} (\bar{d}-i)}}.\]
%\end{theorem}

Using the bound \ref{bnd_jns} Jones derived the corollary that $|M(G)| \leq p^{\frac{1}{2}n(n-1)- k}.$  Vermani, using the bound of Gasch\"{u}tz et al., noticed in \cite[Proposition 2.2]{Vermani2} that 
 \[|M(G)| \leq p^{\frac{1}{2}(n-k-1)(n+k)} = p^{\frac{1}{2}n(n-1)-\frac{1}{2}k(k-1)} \ .\] 
 For non-abelian $p$-groups $G$, Niroomand proved that 
 \begin{equation}
 |M(G)| \leq p^{\frac{1}{2}(n-k-1)(n+k-2)+1}, \label{bnd_nrmnd2}
 \end{equation}
where $k$ is given by $|\gamma_2(G)| = p^k$ \cite{Niroomand2}. Further, in another paper Niroomand and Russo proved that the bound \ref{bnd_nrmnd2} is better than the bound \ref{bnd_ellis} of Ellis and Weigold provided $G^{ab}$ is elemementary abelian \cite[Theorem 1.2]{Niroomand1}. %For groups $G \in ECF(m,n,p)$, this bound of Niroomand \ref{bnd_nrmnd2} was earlier obtained by Moravec \cite[Proposition 5.1]{Moravec}. %The set ECF(m, n, p) is defined to be the set of all groups of order $p^n$ and class $m-1$ for which $G^{ab}$ is elementary abelian and $\gamma_i(G)/\gamma_{i+1}(G) = p$ for $2 \leq i \leq m-1$.

\vspace{.2cm}
We mention here that the bound \ref{bnd_ellis} of Ellis and Weigold  was derived from the following more general bound of theirs.
 
 \begin{equation}
|M(G)| \leq p^{\frac{1}{2}d(m-e) + (\delta-1)(n-m)-max(0, \delta-2)}  \label{bnd_ellis1}
\end{equation}
 where $m$ and $p^e$ are the order and the exponent of $G^{ab}$ respectively and $d$ and $\delta$ are the minimal no. of generators of $G$ and $G/Z(G)$ respectively.

We then note that the bound \ref{bnd_ellis1} of Ellis and Weigold is more general than the bound \ref{bnd_nrmnd2} of Niroomand. In the following theorem we see that a visibly more general bound than the bound \ref{bnd_nrmnd2} can be derived from the bound. \ref{bnd_ellis1}. %This can be seen by the following calculation. Since $G$ is non-abelian we have $d(G/Z(G)) = \delta \geq 2$.

\begin{theorem}\label{thm1}
Let $G$ be a non-abelian finite $p$-group of order $p^n$ with $|\gamma_2(G)| =p^k$ and $d(G) = d$. Then
\[|M(G)| \leq p^{\frac{1}{2}(d-1)(n+k-2)+1}.\]
\end{theorem}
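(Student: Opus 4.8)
The plan is to obtain the stated inequality as a direct specialization of the Ellis--Weigold bound \ref{bnd_ellis1}, by recording the constraints that its four parameters must satisfy. Write $|G^{ab}| = p^m$, let $p^e$ be the exponent of $G^{ab}$, and put $\delta = d(G/Z(G))$. Since $G^{ab} = G/\gamma_2(G)$ and $|\gamma_2(G)| = p^k$, we have $m = n - k$ and hence $n - m = k$. Because $G$ is non-abelian, $G/Z(G)$ is non-cyclic (otherwise $G$ would be abelian), so $\delta \geq 2$ and the term $\max(0,\delta-2)$ equals $\delta - 2$. Substituting these facts into \ref{bnd_ellis1} turns the exponent into the explicit expression $\tfrac{1}{2}d(m-e) + (\delta-1)k - (\delta-2)$ in the parameters $d,\delta,e,k$.

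Next I would collect the three elementary inequalities linking these parameters. First, $G/Z(G)$ is a quotient of $G$, so $\delta = d(G/Z(G)) \leq d(G) = d$. Second, $G$ non-abelian forces $\gamma_2(G) \neq 1$, whence $k \geq 1$. Third --- and this is the only structural input --- by the Burnside basis theorem the number of invariant factors of $G^{ab}$ equals $d(G^{ab}) = d(G) = d$, so $G^{ab}$ is a direct product of $d$ cyclic groups, the largest of which has order $p^e$; comparing orders gives $p^m \leq (p^e)^d$, that is, $m \leq de$.

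With these in hand the claim becomes purely arithmetic. Subtracting the exponent of \ref{bnd_ellis1} from the target exponent $\tfrac{1}{2}(d-1)(n+k-2)+1$ and using $n = m + k$, the difference simplifies, after routine cancellation, to
\[
(d-\delta)(k-1) + \tfrac{1}{2}(de - m).
\]
Both summands are non-negative: $d - \delta \geq 0$ and $k - 1 \geq 0$ by the first two inequalities above, while $de - m \geq 0$ by the third. Hence the difference is non-negative, which is exactly the asserted bound $|M(G)| \leq p^{\frac{1}{2}(d-1)(n+k-2)+1}$.

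I do not anticipate a serious obstacle; the entire content lies in identifying the correct parameter constraints, and the argument is essentially bookkeeping. The one step that is genuinely structural rather than formal is the inequality $m \leq de$, encoding that a $d$-generated abelian $p$-group of exponent $p^e$ has order at most $p^{de}$; the bounds $\delta \leq d$ and $k \geq 1$ are immediate from $G$ being non-abelian. Everything else is the cancellation leading to the displayed manifestly non-negative quantity.
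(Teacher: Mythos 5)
Your proposal is correct and takes essentially the same route as the paper: both specialize the Ellis--Weigold bound \ref{bnd_ellis1} using $\delta \geq 2$ (so that $\max(0,\delta-2)=\delta-2$) and reduce the claim to the non-negativity of $(d-\delta)(k-1)+\tfrac{1}{2}(de-m)$, which is precisely the paper's quantity $\tfrac{1}{2}(d-1)e-\tfrac{1}{2}(n-k-e)+(d-\delta)(k-1)$ in different notation, justified by the same invariant-factor considerations. The only difference is presentational: you compute the difference of exponents directly, while the paper reaches the same expression through a chain of rewritings of the exponent.
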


%which is clearly a more general bound than the bound \ref{bnd_nrmnd2} of Niroomand.

\vspace{.2cm}

Ellis and Weigold also noticed that their bound \ref{bnd_ellis} is attained if $G = C_{p^e} \times C_{p^e} \times \ldots  \times C_{p^e}$.  The bound \ref{bnd_ellis} of Ellis and Weigold was rederived by Niroomand and Russo. They further improved the bound when $G^{ab} \neq C_{p^e} \times C_{p^e} \times \ldots  \times C_{p^e}$ proving that 
\begin{equation}
|M(G)| \leq  p^{\frac{1}{2}(d(G)-1)(n+k-1)} \label{bnd_nrmnd1} 
\end{equation}
in this case.

Now of course Theorem \ref{thm1} provides a visibly stronger bound than the bound \ref{bnd_nrmnd1}. But the bound \ref{bnd_nrmnd1} motivates us to investigate further the case $G^{ab} \neq C_{p^e} \times C_{p^e} \times \ldots  \times C_{p^e}$. The following theorem sharpens the bound \ref{bnd_ellis1}. 

\begin{theorem} \label{thm4}
Let $G$ be a finite $p$-group of order $p^n$ with $d(G) = d, d(G/Z(G) = \delta$ and $G^{ab} = C_{p^{\alpha_1}} \times C_{p^{\alpha_2}} \times \cdots \times C_{p^{\alpha_d}} (\alpha_1 \geq \alpha_2 \geq \cdots \geq \alpha_d)$. Then 
\[|M(G)| \leq p^{\frac{1}{2}(d-1)(n-k-(\alpha_1-\alpha_d)) + (\delta-1)k-max(0, \delta-2)}.\]
\end{theorem}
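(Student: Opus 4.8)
The plan is to reopen the proof of the Ellis--Weigold bound \ref{bnd_ellis1} and to separate, inside the exponent, the contribution of $G^{ab}$ from that of $\gamma_2(G)$. Their bound already splits as an ``abelian'' summand $\tfrac12 d(m-e)$ (with $p^m=|G^{ab}|$ and $p^e=\exp(G^{ab})$, so $e=\alpha_1$) plus a ``$\gamma_2$'' summand $(\delta-1)(n-m)-\max(0,\delta-2)=(\delta-1)k-\max(0,\delta-2)$. The guiding observation is that the abelian summand is only forced to be this large when $G^{ab}$ is homocyclic: in general the genuine contribution of the abelianization is governed by $|M(G^{ab})|$, which is strictly smaller as soon as $\alpha_1>\alpha_d$. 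So the strategy is to keep the $\gamma_2$-summand, with its saving $-\max(0,\delta-2)$, untouched, and to replace the coarse estimate $p^{\frac12 d(m-e)}$ by the sharper factor coming from the explicit type $(\alpha_1,\dots,\alpha_d)$.

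Concretely, I would first establish, by re-running the Ellis--Weigold mechanism for \ref{bnd_ellis1} but retaining $|M(G^{ab})|$ in place of its coarse bound, the intermediate inequality
\[
|M(G)|\ \le\ |M(G^{ab})|\cdot p^{(\delta-1)k-\max(0,\delta-2)} .
\]
The engine here is the Ganea exact sequence $N\otimes (G/N)^{ab}\to M(G)\to M(G/N)\to N\cap\gamma_2(G)\to 0$ for a suitable central subgroup $N$, which extracts the factor $M(G^{ab})$; the refinement from a crude exponent $d-1$ to $\delta-1$, together with the saving $-\max(0,\delta-2)$, comes precisely from the fact that the commutator pairing controlling the $\gamma_2(G)$-part factors through $G/Z(G)$, a group generated by $\delta$ elements. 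This part of the argument is exactly the content of Ellis and Weigold's proof and is insensitive to whether $G^{ab}$ is homocyclic.

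The second step is purely combinatorial. From $M(C_{p^{\alpha_1}}\times\cdots\times C_{p^{\alpha_d}})\cong\prod_{i<j}C_{p^{\min(\alpha_i,\alpha_j)}}$ with $\alpha_1\ge\cdots\ge\alpha_d$ one gets $\log_p|M(G^{ab})|=\sum_{i<j}\alpha_j=\sum_{j=2}^d(j-1)\alpha_j$, so it remains to prove the elementary inequality
\[
\sum_{j=2}^d(j-1)\alpha_j\ \le\ \tfrac12(d-1)\big(m-\alpha_1+\alpha_d\big),\qquad m=\sum_{i=1}^d\alpha_i .
\]
The difference of the two sides equals $\sum_{j=2}^{d-1}\big(\tfrac{d+1}{2}-j\big)\alpha_j$; the coefficients $\tfrac{d+1}{2}-j$ are decreasing in $j$ and sum to zero, while the $\alpha_j$ are decreasing, so Chebyshev's sum inequality gives nonnegativity immediately. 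Since $m=n-k$, inserting this into the intermediate inequality produces exactly $|M(G)|\le p^{\frac12(d-1)(n-k-(\alpha_1-\alpha_d))+(\delta-1)k-\max(0,\delta-2)}$.

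I expect the main obstacle to lie in the first step rather than the combinatorics: one must reproduce the Ellis--Weigold saving $-\max(0,\delta-2)$ in a form that cleanly leaves the factor $|M(G^{ab})|$ intact, instead of the coarser $p^{\frac12 d(m-e)}$. Checking that the abelian and $\gamma_2(G)$ contributions genuinely decouple in the relevant exact sequence --- so that the refined estimate for $|M(G^{ab})|$ can be substituted without degrading the $(\delta-1)k-\max(0,\delta-2)$ term --- is the delicate bookkeeping at the heart of the argument. The homocyclic case $\alpha_1=\alpha_d$ provides a consistency check, since there $m-\alpha_1+\alpha_d=m$ and both the refined and the original estimates collapse to \ref{bnd_ellis1}.
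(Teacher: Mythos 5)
Your proposal is correct and is essentially the paper's own proof: your intermediate inequality $|M(G)|\le |M(G^{ab})|\,p^{(\delta-1)k-\max(0,\delta-2)}$ is precisely what the paper reads off from Ellis--Weigold's Proposition 1 (namely $|M(G)|\,|\gamma_2(G)|\,|\im\Psi|\le |M(G^{ab})|\,p^{k\delta}$ together with $|\im\Psi|\ge p^{\max(0,\delta-2)}$), so the step you flag as the main obstacle requires no new argument, only a citation. Your Chebyshev-sum estimate for $\log_p|M(G^{ab})|$ is the same elementary rearrangement as the paper's Lemma \ref{lem1}, which proves the identical bound $|M(G^{ab})|\le p^{\frac{1}{2}(d-1)(n-k-(\alpha_1-\alpha_d))}$.
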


To see that the bound in the above corollary is better than the bound \ref{bnd_ellis1}, we divide the right hand side of the bound \ref{bnd_ellis1} by the right hand side of the above bound and get the value $p^{\frac{1}{2}[(d-1)(\alpha_1-\alpha_d)-d\alpha_1+n-k]}$ 
which equals $p^{\frac{1}{2}[-(d-1)\alpha_d-\alpha_1+n-k]}$. 
Which on putting $n-k = \alpha_1+\alpha_2+ \cdots+ \alpha_d$ becomes 
$p^{\frac{1}{2}[\alpha_2+\cdots+\alpha_d-(d-1)\alpha_d)]}$. 
But this value is clearly greater than or equal to 1 because $\alpha_1 \geq \alpha_2 \geq \cdots \geq \alpha_d$.

\vspace{.2cm}
As a consequence we derive the following corollary.

\begin{corollary}\label{cor1}
Let $G$ be a non-abelian finite $p$-group of order $p^n$ with $d(G) = d$, $|\gamma_2(G)| = p^k$ and $G^{ab} = C_{p^{\alpha_1}} \times C_{p^{\alpha_2}} \times \cdots \times C_{p^{\alpha_d}} (\alpha_1 \geq \alpha_2 \geq \cdots \geq \alpha_d)$. Then  
\[|M(G)| \leq  p^{\frac{1}{2}(d-1)(n+k-2-(\alpha_1-\alpha_d))+1}.\]
In particular if $G^{ab}$ is not homocyclic, then
\[|M(G)| \leq  p^{\frac{1}{2}(d-1)(n+k-3)+1}.\]

\end{corollary}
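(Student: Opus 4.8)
The plan is to deduce Corollary \ref{cor1} directly from Theorem \ref{thm4} by a comparison of exponents, using only two standard facts about non-abelian finite $p$-groups. First, since $G$ is non-abelian its central quotient $G/Z(G)$ cannot be cyclic, so $\delta = d(G/Z(G)) \geq 2$; consequently $\max(0,\delta-2) = \delta-2$, and the exponent furnished by Theorem \ref{thm4} simplifies to
\[
\tfrac{1}{2}(d-1)\bigl(n-k-(\alpha_1-\alpha_d)\bigr) + (\delta-1)k - (\delta-2).
\]
Second, $\gamma_2(G)\neq 1$ forces $k\geq 1$, and because $G/Z(G)$ is a quotient of $G$ its minimal number of generators satisfies $\delta \leq d$.

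With the maximum resolved, I would establish the first displayed bound by showing that the target exponent $\frac{1}{2}(d-1)\bigl(n+k-2-(\alpha_1-\alpha_d)\bigr)+1$ dominates the quantity above. Forming the difference of the two, the terms in $n$, in $\alpha_1-\alpha_d$, and the leading factor $\frac{1}{2}(d-1)$ all collapse, leaving
\[
(d-1)(k-1) - (\delta-1)(k-1) = (k-1)(d-\delta).
\]
By the two facts recorded above this is nonnegative, which yields the first inequality. The degenerate situations are then automatic: when $k=1$ the difference vanishes and the two exponents coincide, and likewise when $d=\delta$.

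For the \emph{in particular} statement I would argue that if $G^{ab}$ is not homocyclic then the invariants $\alpha_1 \geq \cdots \geq \alpha_d$ are not all equal, so $\alpha_1 > \alpha_d$; being integers, $\alpha_1-\alpha_d \geq 1$. Since $d \geq \delta \geq 2$ gives $d-1 > 0$, multiplying the inequality $n+k-2-(\alpha_1-\alpha_d) \leq n+k-3$ by $\frac{1}{2}(d-1)$ and adding $1$ produces the bound $|M(G)| \leq p^{\frac{1}{2}(d-1)(n+k-3)+1}$.

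I do not expect a genuine obstacle here, as the whole argument is an exponent comparison resting on Theorem \ref{thm4}; the only real content is the clean factorization of the difference as $(k-1)(d-\delta)$. The single point demanding care is the correct elimination of the $\max(0,\delta-2)$ term, since this is precisely where non-abelianness enters through $\delta \geq 2$, and mishandling it would derail the computation.
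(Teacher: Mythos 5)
Your proposal is correct and follows essentially the same route as the paper: the paper's proof of Corollary \ref{cor1} simply says it "runs on the same lines as the proof of Theorem \ref{thm1}", which is exactly what you carried out — resolving $\max(0,\delta-2)=\delta-2$ via non-abelianness and isolating the non-negative discrepancy $(k-1)(d-\delta)$ between the exponent of Theorem \ref{thm4} and the target exponent. The handling of the \emph{in particular} clause via $\alpha_1-\alpha_d\geq 1$ is likewise the intended argument.
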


\vspace{.4cm}
The bound \ref{bnd_vrmni2} of Vermani comes with a hypothesis, so it can not be compared in general with the bound obtained in Theorem \ref{thm1}. Though using Theorem \ref{thm1}  we can improve the bound of Vermani for non-abelian finite $p$-groups of nilpotency class at least 3.
%Now nne can see that the bound on $|M(G)|$ in Theorem \ref{thm1} can not be compared in general with  the bound obtained by Jones in \cite{Jones2} and its generalizing bound $|\gamma_2(G)||M(G)| \leq |(G/K)^{ab} \otimes K|p^{\frac{1}{2}(m-r)(m+r-1)}$ obtained by Vermani in \cite{Vermani2}. Using Theorem \ref{thm1} we improve the bound of Vermani (and so the bound of Jones) for non-abelian finite $p$-groups of nilpotency class at least 3 in the following theorem. 

\begin{theorem}\label{thm2}
Let $G$ be a finite $p$-group of nilpotency class at least 3 and $K$ a central subgroup of $G$ such that the restriction homomorphism from $M(G)$ to $M(K)$ is zero. Also assume that $|G/K| = p^m$ and $|\gamma_2(G)K/K| = p^r$. Then 
\[|\gamma_2(G)||M(G)| \leq \big|(G/K)^{ab} \otimes K\big|p^{\frac{1}{2}d(G/K)(m+r-2)+1}.\]
In particular, 
\[|\gamma_2(G)||M(G)| \leq \big|(G/K)^{ab} \otimes K\big|p^{\frac{1}{2}(m-r)(m+r-2)+1}.\]
\end{theorem}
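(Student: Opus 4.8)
The plan is to retrace Vermani's route to the bound \ref{bnd_vrmni2}, but to feed into it the sharper estimate of Theorem \ref{thm1} in place of the classical Green-type bound for the multiplier of $G/K$. Set $Q = G/K$, so $|Q| = p^m$ and $|\gamma_2(Q)| = |\gamma_2(G)K/K| = p^r$. The single point at which the hypothesis ``class at least $3$'' enters is the observation that $Q$ is non-abelian: were $\gamma_2(G) \subseteq K \subseteq Z(G)$, we would get $\gamma_3(G) = [\gamma_2(G), G] \subseteq [Z(G), G] = 1$, contradicting class $\geq 3$; hence $\gamma_2(Q) = \gamma_2(G)K/K \neq 1$. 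Consequently $r \geq 1$, and since a non-abelian $p$-group requires at least two generators, $|Q^{ab}| = p^{m-r} \geq p^{d(Q)} \geq p^2$, i.e. $m - r \geq 2$. This last inequality is the one that makes the whole improvement go through.

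The structural core is the inequality
\[|\gamma_2(G)|\,|M(G)| \leq |Q^{ab} \otimes K|\,|M(Q)|\,p^{r},\]
which I will call $(\ast)$ and which is precisely what sits behind \ref{bnd_vrmni2}: Vermani obtains \ref{bnd_vrmni2} from $(\ast)$ by estimating $|M(Q)| \leq p^{\frac{1}{2}(m-r-1)(m+r)}$ (the bound recorded in the text for a group of order $p^m$ with commutator subgroup of order $p^r$), since $\frac{1}{2}(m-r-1)(m+r) + r = \frac{1}{2}(m-r)(m+r-1)$. To establish $(\ast)$ I would invoke the Ganea exact sequence of the central extension $1 \to K \to G \to Q \to 1$,
\[Q^{ab} \otimes K \longrightarrow M(G) \longrightarrow M(Q) \longrightarrow K \cap \gamma_2(G) \longrightarrow 0,\]
which yields $|M(G)|\,|K \cap \gamma_2(G)| \leq |Q^{ab} \otimes K|\,|M(Q)|$; multiplying through by $p^r = |\gamma_2(Q)| = |\gamma_2(G)| / |K \cap \gamma_2(G)|$ gives $(\ast)$. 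The restriction hypothesis on $M(G) \to M(K)$ is exactly the condition under which this argument is run.

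It then remains to insert Theorem \ref{thm1}. As $Q$ is non-abelian of order $p^m$ with $|\gamma_2(Q)| = p^r$ and $d(Q) = d(G/K)$, Theorem \ref{thm1} gives $|M(Q)| \leq p^{\frac{1}{2}(d(G/K)-1)(m+r-2)+1}$. Substituting this into $(\ast)$ leaves the exponent $\frac{1}{2}(d(G/K)-1)(m+r-2) + 1 + r$ on the right, and a one-line computation shows the target exponent $\frac{1}{2}d(G/K)(m+r-2)+1$ exceeds it by exactly $\frac{1}{2}(m-r-2)$, which is $\geq 0$ by the inequality $m - r \geq 2$ secured in the first paragraph. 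This proves the main assertion. The ``In particular'' statement then follows at once: $Q^{ab}$ has order $p^{m-r}$, so $d(G/K) \leq m - r$, and since $m + r - 2 \geq 0$ we may replace $d(G/K)$ by $m-r$ in the exponent.

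The main obstacle I anticipate is not the closing arithmetic but the clean derivation of $(\ast)$: one has to decompose $|\gamma_2(G)|$ correctly as $|K \cap \gamma_2(G)| \cdot |\gamma_2(Q)|$ and to apply the Ganea / five-term sequence with the correct central-extension hypotheses, so that the kernel of $M(G) \to M(Q)$ is genuinely a quotient of $Q^{ab} \otimes K$. Once $(\ast)$ is in hand, the improvement over \ref{bnd_vrmni2} is simply the improvement of Theorem \ref{thm1} over the Green-type bound, and the role of the class-$3$ hypothesis is exactly to license that substitution by making $Q$ non-abelian.
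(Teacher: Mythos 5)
Your proof is correct, and its skeleton is the same as the paper's: apply Theorem \ref{thm1} to the quotient $G/K$, which is non-abelian precisely because $G$ has class at least $3$ (your argument for this is the intended one), deduce $m \geq r+2$, and feed the resulting bound on $|M(G/K)|$ into the Vermani-type inequality $|\gamma_2(G)|\,|M(G)| \leq \big|(G/K)^{ab} \otimes K\big|\,|M(G/K)|\,p^{r}$; your closing computation (a surplus of $\tfrac{1}{2}(m-r-2) \geq 0$ in the exponent) is exactly the paper's line-by-line estimate in disguise, and the reduction of the ``in particular'' claim via $d(G/K) \leq m-r$ is the same routine step the paper leaves implicit. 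The one genuine difference is the provenance of the key inequality $(\ast)$: the paper simply cites it as \cite[Theorem 1.2]{Vermani2}, whereas you re-derive it from the Ganea exact sequence $(G/K)^{ab} \otimes K \to M(G) \to M(G/K) \to K \cap \gamma_2(G) \to 0$ together with the decomposition $|\gamma_2(G)| = |K \cap \gamma_2(G)|\cdot|\gamma_2(G)K/K|$. That derivation is correct, and it buys a small bonus you did not flag: Ganea's sequence is exact for \emph{every} central extension, so the hypothesis that the restriction map $M(G) \to M(K)$ vanishes is never actually used in your argument --- it enters your write-up only because it is part of the theorem's (and Vermani's) statement. So your route is self-contained and marginally more general where the paper's is a citation, at the cost of invoking Ganea's theorem as an external input.
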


By the coclass of a $p$-group $G$ of order $p^n$ we mean the number $n-c$ where $c$ is the nilpotency class of $G$. In 2009, Moravec proved, for finite $p$-group $G$ of coclass $r$, that $|M(G)| \leq  p^{r^2+(k+2)r}$ where $k$ is given by $|\gamma_2(G)| = p^k$ \cite[Theorem 1.1]{Moravec1}. The following Theorem improves this bound.

\begin{theorem} \label{thm3}
Let $G$ be a finite $p$-group of order $p^n$ and coclass $r$ with $|\gamma_2(G)| = p^k$. Then $|M(G)| \leq p^{\frac{1}{2}(r^2-r)+kr+1}$.  
\end{theorem}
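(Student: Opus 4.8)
The plan is to feed the structural consequences of having coclass $r$ into Theorem \ref{thm1}, which already expresses $|M(G)|$ in terms of $n$, $k$, and $d = d(G)$. Concretely, I would extract from the coclass hypothesis two numerical inequalities, namely $n \le k + r + 1$ and $d \le r + 1$, and then substitute them into the exponent $\frac{1}{2}(d-1)(n+k-2)+1$ of Theorem \ref{thm1}. Throughout I may assume $G$ is non-abelian, which is the range in which Theorem \ref{thm1} is available and the range relevant to improving Moravec's bound.

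To obtain these two inequalities I would work with the lower central series $G = \gamma_1(G) > \gamma_2(G) > \cdots > \gamma_c(G) > \gamma_{c+1}(G) = 1$, where $c$ is the nilpotency class, so that by definition of coclass $r = n - c$. For a nilpotent group this chain is strictly decreasing, so each of the $c-1$ successive quotients $\gamma_i(G)/\gamma_{i+1}(G)$ with $2 \le i \le c$ is nontrivial and hence contributes a factor of at least $p$ to $|\gamma_2(G)| = p^k$; this gives $k \ge c - 1$, i.e. $c \le k+1$. Combining with $n = c + r$ yields $n \le k + r + 1$. For the bound on the generator number, I would use $d(G) = d(G^{ab}) \le \log_p|G^{ab}| = n - k$; since $n - k \le (k + r + 1) - k = r + 1$, this gives $d \le r + 1$, that is $d - 1 \le r$.

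With these in hand the conclusion is immediate. Both $d - 1$ and $n + k - 2$ are nonnegative, so from $d - 1 \le r$ together with $n + k - 2 \le 2k + r - 1$ (which is just $n \le k + r + 1$) I obtain
\[
(d-1)(n+k-2) \le r(2k + r - 1) = r^2 - r + 2kr,
\]
and halving and adding $1$ turns the exponent of Theorem \ref{thm1} into exactly $\frac{1}{2}(r^2 - r) + kr + 1$, as required.

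I do not expect a genuine obstacle here: the real content is the translation of ``coclass $r$'' into the ceiling $n \le k + r + 1$ together with the simultaneous bound $d \le r + 1$, after which Theorem \ref{thm1} does all the work. The only points requiring a little care are that the product estimate is valid precisely because both factors are nonnegative in the relevant range, and that the argument lives inside the non-abelian hypothesis under which Theorem \ref{thm1} holds.
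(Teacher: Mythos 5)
Your proof is correct and follows essentially the same route as the paper: both derive $d(G)-1 \le r$ and $n+k-2 \le r+2k-1$ from the coclass hypothesis (via $c \le k+1$ and $d(G) \le n-k$) and then substitute into the exponent of Theorem \ref{thm1}. You are in fact slightly more careful than the paper, which calls these inequalities ``obvious'' and does not flag the non-abelian hypothesis needed to invoke Theorem \ref{thm1}.
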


\section{Proofs of Theorems}

{\bf Proof of Theorem \ref{thm1}}
Since $G$ is non-abelian we have $d(G/Z(G)) = \delta \geq 2$, otherwise $G/Z(G)$ is cyclic and $G$ is abelian. Therefore $max(0, \delta-2) = \delta-2$. Let $e$ be the exponent of $G^{ab}$. Then from the bound \ref{bnd_ellis1} we get

\begin{eqnarray*} \label{derivation_nrmnd}
|M(G)| & \leq & p^{\frac{1}{2}d(n-k-e) + (\delta-1)k-max(0, \delta-2)} \nonumber \\
        & = & p^{\frac{1}{2}(d-1)(n-k-e)+\frac{1}{2}(n-k-e) + (\delta-1)k- (\delta-2)} \nonumber \\
        & = & p^{\frac{1}{2}(d-1)(n-k-e) +\frac{1}{2}(n-k-e) + (\delta-1)(k-1)+1} \nonumber \\
       & = & p^{\frac{1}{2}(d-1)(n-k) - \frac{1}{2}(d-1)e +\frac{1}{2}(n-k-e) + (d-1)(k-1) - (d-\delta)(k-1)+1} \nonumber \\
    & = & p^{\frac{1}{2}(d-1)(n-k+2k-2) - \frac{1}{2}(d-1)e +\frac{1}{2}(n-k-e) - (d-\delta)(k-1)+1} \nonumber \\
        & = & p^{\frac{1}{2}(d-1)(n+k-2)+1 - [\frac{1}{2}(d-1)e -\frac{1}{2}(n-k-e) + (d-\delta)(k-1)]} \nonumber 
\end{eqnarray*}

Now notice that $\frac{1}{2}(d-1)e -\frac{1}{2}(n-k-e)$ is a non-negative value. To see this, let 
$G^{ab} = C_{p^{\alpha_1}} \times C_{p^{\alpha_2}} \times \cdots \times C_{p^{\alpha_d}} (\alpha_1 \geq \alpha_2 \geq \cdots \geq \alpha_d)$. Then $n-k = \alpha_1+\alpha_2+\cdots + \alpha_d$ and $e = \alpha_1$. Therefore
\begin{eqnarray*}
\frac{1}{2}(d-1)e -\frac{1}{2}(n-k-e) &=& \frac{1}{2}(d-1)\alpha_1 -\frac{1}{2}(\alpha_1+\alpha_2+\cdots + \alpha_d-\alpha_1)\\
&=& \frac{1}{2}\bigg(\alpha_1 +\alpha_1 +\cdots \alpha_1 \ \ (d-1 \ \text{times})\bigg) -\frac{1}{2}(\alpha_2+\cdots + \alpha_d)\\
&=& \frac{1}{2}(\alpha_1 -\alpha_2) + (\alpha_1-\alpha_3) +\cdots (\alpha_1-\alpha_d),
\end{eqnarray*}
which is clearly a non-negative value because $\alpha_1 \geq \alpha_2 \geq \cdots \geq \alpha_d$. Obviously $(d-\delta)(k-1)$ is a non-negative value. Hence we have 
\begin{equation*}
\boldsymbol{ |M(G)| \leq p^{\frac{1}{2}(d-1)(n+k-2)+1}}.
\end{equation*}

The following Lemma sharpens the bound \ref{bnd_nrmnd1} for abelian $p$-groups. 

\begin{lemma}\label{lem1}
Let $G$ be an abelian $p$-group of order $p^n$ such that $G = C_{p^{\alpha_1}} \times C_{p^{\alpha_2}} \times \cdots \times C_{p^{\alpha_d}} (\alpha_1 \geq \alpha_2 \geq \cdots \geq \alpha_d)$ and $|G| = p^n$, then $|M(G)| \leq p^{\frac{1}{2}(d(G)-1)(n-(\alpha_1-\alpha_d))}$.
\end{lemma}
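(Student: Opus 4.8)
The plan is to exploit the fact that for a finite abelian $p$-group the Schur multiplier is completely understood, so the whole statement reduces to an arithmetic inequality among the $\alpha_i$. Writing $G = C_{p^{\alpha_1}} \times \cdots \times C_{p^{\alpha_d}}$, the classical formula for the Schur multiplier of a finite abelian group (obtainable, e.g., from the exterior square $M(G) \cong G \wedge G$) gives
\[
M(G) \cong \prod_{1 \leq i < j \leq d} C_{p^{\min(\alpha_i, \alpha_j)}},
\]
so that $|M(G)| = p^{N}$ with $N = \sum_{1 \leq i < j \leq d} \min(\alpha_i, \alpha_j)$. Since the exponents are arranged in non-increasing order, $\min(\alpha_i, \alpha_j) = \alpha_j$ whenever $i < j$, and $d(G) = d$, so the entire problem collapses to bounding the single integer $N$ above by $\frac{1}{2}(d-1)\big(n - (\alpha_1 - \alpha_d)\big)$.

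First I would rewrite each term symmetrically. Using $\min(\alpha_i, \alpha_j) = \frac{1}{2}\big[(\alpha_i + \alpha_j) - |\alpha_i - \alpha_j|\big]$ together with $\sum_{i<j}(\alpha_i + \alpha_j) = (d-1)\sum_i \alpha_i = (d-1)n$, I obtain
\[
N = \tfrac{1}{2}(d-1)n - \tfrac{1}{2}\sum_{1 \leq i < j \leq d}(\alpha_i - \alpha_j),
\]
where the absolute values drop out because $\alpha_i \geq \alpha_j$ for $i < j$. Comparing with the target exponent, the desired bound $N \leq \frac{1}{2}(d-1)\big(n - (\alpha_1 - \alpha_d)\big)$ is therefore \emph{equivalent} to the purely combinatorial claim
\[
\sum_{1 \leq i < j \leq d}(\alpha_i - \alpha_j) \geq (d-1)(\alpha_1 - \alpha_d),
\]
which is the heart of the matter and where I expect the real (albeit elementary) work to lie.

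To prove this last inequality I would pass to the consecutive gaps $\beta_l = \alpha_l - \alpha_{l+1} \geq 0$ for $1 \leq l \leq d-1$. Telescoping gives $\alpha_i - \alpha_j = \sum_{l=i}^{j-1} \beta_l$, and counting the pairs $(i,j)$ with $i \leq l < j$ contributing each $\beta_l$ yields
\[
\sum_{1 \leq i < j \leq d}(\alpha_i - \alpha_j) = \sum_{l=1}^{d-1} l(d-l)\,\beta_l,
\]
while $(d-1)(\alpha_1 - \alpha_d) = \sum_{l=1}^{d-1}(d-1)\beta_l$. The difference is $\sum_{l=1}^{d-1}\big[l(d-l) - (d-1)\big]\beta_l$, and since the concave function $l \mapsto l(d-l)$ attains its minimum over $\{1,\dots,d-1\}$ at the two endpoints, both equal to $d-1$, every bracket is non-negative; as each $\beta_l \geq 0$, the difference is non-negative and the claim follows.

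The only genuine obstacle is thus the combinatorial inequality, and once it is framed in terms of the gaps $\beta_l$ it reduces to the elementary estimate $l(d-l) \geq d-1$ on the relevant range. An alternative to the telescoping bookkeeping would be to observe that the coefficient of $\alpha_j$ in $\sum_{i<j}(\alpha_i - \alpha_j)$ equals $d - 2j + 1$, which is anti-symmetric under $j \mapsto d+1-j$, and to pair the terms accordingly; I would nonetheless keep the telescoping version, since it makes the non-negativity (and the exact condition for equality, namely $\beta_l = 0$ for all $1 < l < d-1$) completely transparent.
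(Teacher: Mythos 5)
Your proof is correct, and it takes a genuinely different route from the paper's, even though both arguments start from the same explicit formula $|M(G)| = p^{\alpha_2 + 2\alpha_3 + \cdots + (d-1)\alpha_d}$ (the paper cites Karpilovsky's Corollary 2.2.12; your $\sum_{i<j}\min(\alpha_i,\alpha_j)$ is the same number). The paper treats $d=2$ separately and, for $d\geq 3$, centers the middle exponents around their mean, writing $\alpha_i = \frac{n-\alpha_1-\alpha_d}{d-2}+k_i$; the bound then reduces to showing $(d-2)k_2+(d-3)k_3+\cdots+k_{d-1}\geq 0$, which is proved by a rearrangement-type observation: the $k_i$ sum to zero and change sign at most once, while the weights decrease. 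Your argument instead symmetrizes via $\min(a,b)=\frac{1}{2}\bigl[(a+b)-|a-b|\bigr]$, reduces the lemma to the single inequality $\sum_{i<j}(\alpha_i-\alpha_j)\geq (d-1)(\alpha_1-\alpha_d)$, and settles it with consecutive gaps $\beta_l$ and the estimate $l(d-l)\geq d-1$, i.e. $(l-1)(d-l-1)\geq 0$. What your version buys: it handles $d=2$ uniformly (there the only coefficient is $l(d-l)=1=d-1$, giving equality), it avoids the division by $d-2$ that forces the paper's case split, and it makes the equality condition transparent ($\alpha_2=\alpha_3=\cdots=\alpha_{d-1}$); the paper's version, in turn, needs no telescoping bookkeeping and goes through with a one-line sign argument once the centering is set up. Both are complete and elementary; yours is arguably the cleaner reduction.
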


\begin{proof}
For $d =2$, it is obvious from \cite[Corollary 2.2.12]{Karpilowski}. So let us assume that $d \geq 3$. Let $\alpha_i = \frac{n-\alpha_1-\alpha_d}{(d-2)}+k_i$ for $i = 2,3, \cdots, d-1$. Then by \cite[Corollary 2.2.12]{Karpilowski} 
\begin{eqnarray*}
|M(G)| & = & p^{\alpha_2+ 2\alpha_3+ \cdots +(d-1)\alpha_d}\\
       &=&  p^{(d-1)(\alpha_2+ \alpha_3 + \cdots +\alpha_d) - (d-2)\alpha_2 -(d-3)\alpha_3 -\cdots- \alpha_{d-1}}\\
       &=& p^{(d-1)(n-\alpha_1) - (d-2)\alpha_2 -(d-3)\alpha_3 -\cdots- \alpha_{d-1}}\\
       &=& p^{(d-1)(n-\alpha_1) - \frac{(n-\alpha_1-\alpha_d)}{d-2}\Big(1+2+ \cdots d-2\Big)-\Big[(d-2)k_2+(d-3)k_3+ \cdots + k_{d-1}\Big]}\\  
       &=& p^{(d-1)(n-\alpha_1) - \frac{1}{2}(n-\alpha_1-\alpha_d)(d-1)-\Big[(d-2)k_2+(d-3)k_3+ \cdots + k_{d-1}\Big]}.\\
\end{eqnarray*}

%Now we note that the value $(d-2)k_2+(d-3)k_3+ \cdots + k_{d-1}$ is non-negative. For this 
Now observe that $k_2 + k_3 + \cdots + k_{d-1} = 0$. Also notice that there exist a $j$ such that $k_2, k_3, \cdots, k_j$ are non-negative values and $k_{j+1}, k_{j+2}, \cdots, k_{d-1}$ are non-positive values. It follows that the value $(d-2)k_2+(d-3)k_3+ \cdots + k_{d-1}$ is non-negative. Therefore, $|M(G)| \leq p^{\frac{1}{2}(d(G)-1)(n-(\alpha_1-\alpha_d))}$.       
\end{proof}

{\bf Proof of Theorem \ref{thm4}}
Let $\Psi$ be as defined in \cite[Proposition 1]{Ellis1}. Following \cite[Proposition 1]{Ellis1} we have that 
\[|M(G)||\gamma_2(G)||\text{Im} \Psi| \leq |M(G^{ab})|p^{k\delta}.\]
But note from \cite[Proposition 1]{Ellis1} that $|\text{Im} \Psi| \geq p^{max(0, \delta-2)}$. Now the theorem follows from the Lemma \ref{lem1}.
%Having Lemma \ref{lem1} in hand the proof of the theorem runs on the same lines as the proof of \cite[Theorem 2]{Ellis1}.

\vspace{.2cm}

{\bf Proof of Corollary \ref{cor1}}
Having Theorem \ref{thm4} in hand the proof of the corollary runs on the same lines as the proof of Theorem \ref{thm1}.

%Now we note that the value $(d-2)k_2+(d-3)k_3+ \cdots + k_{d-1}$ is non-negative. For this 

  %Now  and $d(\frac{G/\gamma_3(G)}{Z(G/\gamma_3(G))} <d(G/Z(G))$ 
%that $|M(G)| \leq p^{\frac{1}{2}(d-1)(n+k-2)+1-m_{G}}$.
%\begin{eqnarray*}
%|M(G)| & \leq & p^{\frac{1}{2}(d-1)[(n-r+k-r-2)-(e_{G^{ab}}-e_{1G^{ab}})]+1-[(d-\bar{d})(k-r-1)]-m_{G}+ r(\bar{d}-1)}\\
%& \leq & p^{\frac{1}{2}(d-1)[(n+k-2)-(e_{G^{ab}}-e_{1G^{ab}})]-r(d-1)+1-[(d-\bar{d})(k-1)]+r(d-\bar{d})-m_{G}+ r(\bar{d}-1)}\\
%& \leq & p^{\frac{1}{2}(d-1)[(n+k-2)-(e_{G^{ab}}-e_{1G^{ab}})]+1-[(d-\bar{d})(k-1)]-m_{G}}.\\
%\end{eqnarray*}

 %\begin{remark}
 %Let $|\gamma_i(G)/\gamma_{i+1(G)}| = p^{r_i}$ for $2 \leq i \leq c$. Then the bound from Theorem \ref{thm1} can be furher improved using Lemma \ref{joneslem} repeatedly in the following way
 %\[|M(G) \leq p^{\frac{1}{2}(d_G-1)[(n+k-2)-(e_{G^{ab}}-\alpha_d)]+1-(d_G- d_{G/Z(G)})(k-1)-\sum\limits_{i=2}^{c} (d_{G/Z(G)} - d_{G/Z_{i-1}(G)})r_i-m_G}.\] 
%\end{remark}

\vspace{.4cm}
{\bf Proof of Theorem \ref{thm2}:}
By \cite[Theorem 1.2]{Vermani2} we have that 
\[|\gamma_2(G)||M(G)| \leq |M(G/K)||G/\gamma_2(G)K \otimes K|\bigg|\frac{\gamma_2(G)}{\gamma_2(G) \cap K}\bigg|.\]
Since $G$ is of nilpotency class at least 3, $G/K$ is non-abelian. Hence $m \geq r+2$. Applying Theorem \ref{thm1} we get that
\begin{eqnarray*}
|\gamma_2(G)||M(G)| & \leq & |G/\gamma_2(G)K \otimes K|p^{\frac{1}{2}(d(G/K)-1)(m+r-2)+1+r}\\
 & = & |G/\gamma_2(G)K \otimes K|p^{\frac{1}{2}d(G/K)(m+r-2)-\frac{1}{2}(m+r-2)+1+r}\\
 & \leq & |G/\gamma_2(G)K \otimes K|p^{\frac{1}{2}d(G/K)(m+r-2)-\frac{1}{2}(r+2+r-2)+1+r}\\
 & \leq & |G/\gamma_2(G)K \otimes K|p^{\frac{1}{2}d(G/K)(m+r-2)+1}.\\
 \end{eqnarray*}
 
This proves the theorem.

\vspace{.6cm}
{\bf Proof of Theorem \ref{thm3}:} 
Let $c$ be the nilpotency class of $G$. It is obvious that $c \leq k+1$ and $d(G) \leq n-k$. Therefore $d(G) \leq n-c+1 = r+1$, so that $d(G) -1 \leq r$. %Therefore using Theorem \ref{thm1} we get that $|M(G)| \leq \frac{1}{2}r(n+k-2) +1 -m_G$. 
The inequality $c \leq k+1$ can be written as $n-(n-c) \leq k+1$, i.e., $n-r \leq k+1$ so that $n+k-2 \leq r+2k-1$. Using Theorem \ref{thm1} with the inequalities $d(G) -1 \leq r$ and $n+k-2 \leq r+2k-1$ we get the required result.

%We use the following notations. For a multiplicatively written group $G$ let $x,y,a \in G$. Then $[x,y]$ denotes the commutator $x^{-1}y^{-1}xy$ and $x^a$ denotes the conjugate of $x$ by $a$ i.e. $a^{-1}xa$. 
%By $\gen{x}$ we denote the cyclic subgroup generated by $x$. %By $Z(G)$ we denote the center of $G$. % and by $C_G(H)$ we denote the centralizer of $H$ in $G$ where $H$ is a %subgroup of $G$. 
%We write $\gamma_2(G)$ for the commutator subgroup of $G$. %The group of all homomorphisms from a group $H$ to an abelian group $K$ is denoted by $Hom(H, K)$. 
%For $x \in G$,
 % %\ x^G$ denotes the $G$-conjugacy class of $x$ and 
%$[x, G]$ denotes the set of all $[x,y]$, for $y \in G$. %By $C_p$ we denote the cyclic group of order $p$. 
%The subgroup generated by all the elements of order $p$ is denoted by $\Omega_1(Z(G)$. By a class $c$ group we mean a group of nilpotency class $c$.  \\

\vspace{.3cm}

{\bf Acknowledgements:} I am very grateful to my post-doctoral superviser Prof. Boris Kunyavski\u{\i} for his encouragement and support. I am also thankful to Bar-Ilan University, Israel for providing excelent research facilities.


\begin{thebibliography}{999}






%\bibitem{Berkovich}
%Y. Berkovich and Z. Janko, \emph{Groups of prime power order, Vol 3}, Walter de Gruyter, (2011).





\bibitem{Ellis1} 
G. Ellis and J. Wiegold, \emph{A bound on the Schur multiplier of a prime–power group},
Bull. Austral. Math. Soc. 60 (1999), 191-196.







\bibitem{Gaschutz}
W. Gaschutz, J. Neub\"{u}ser and Ti Yen, \emph{Uber den Multiplikator von p-Gruppen}, Math. Z. 100 (1967), 93-96.

\bibitem{Green}
J. A. Green, \emph{On the number of automorphisms of a finite p-group}, Proc. Roy. Soc. London 237 (1956), 574-581.

\bibitem{Jones2}
M.R. Jones, \emph{Multiplicators of $p$-groups}, Math. Z. 127 (1972), 165-166.

\bibitem{Jones1}
M.R. Jones \emph{Some inequalities for the multiplicator of a finite group}, Proc. Amer. Math. Soc. 45, 167-172 (1974).

\bibitem{Karpilowski}
G. Karpilovsky, \emph{The Schur multiplier}, London Math. Soc. Monogr, New Series no.2, (1987).

\bibitem{Moravec1}
P. Moravec, \emph{On the Schur multipliers of finite p-groups of given coclass}, Israel J.Math. 185 (2011), 189-205.

\bibitem{Niroomand1}
P. Niroomand and F.G. Russo, \emph{An improvement of a bound of Green}, J. Algebra Appl., 11 (2012), DOI: 10.1142/S0219498812501162. 

\bibitem{Niroomand2}
P. Niroomand, \emph{On the order of Schur multiplier of non-abelian p-groups}, J. Algebra
322 (2009) 4479-4482.



\bibitem{Vermani1}
L. R. Vermani, \emph{An exact sequence and a Theorem of Gaschutz, Neubuser and Yen on the
multiplicator},  J. London Math. Soc, 1 (1969), 95-100.

\bibitem{Vermani2}
L.R. Vermani, \emph{On the multiplicator of a finite group},  J. London Math. Soc, 1 (1974), 765-768.

\bibitem{Weigold1}
J. Wiegold, \emph{Multiplicators and groups with finite central factor-groups}, Math.
Z. 89 (1965), 345-347.

\bibitem{Weigold2}
J. Wiegold, \emph{Commutator subgroups of finite p-groups}, J. Austral. Math. Soc. 10, (1969), 480-484.

%X. Zhou, On the order of the Schur multiplier of finite p-groups, Commun. Algebra
%22 (1994) 1–8.
%8. Niroomand, P.: On the order of Schur multiplier of non abelian p-groups. J. Algebra 322, 
%Berkovich, Ya.G.: On the order of the commutator subgroups and the Schur multiplier of a finite
%p-group. J. Algebra 144, 269–272 (1991)
%2. Ellis, G.: On the Schur multiplier of p-groups. Comm. Algebra 27(9), 4173–4177 (1999)
%3. Ellis, G., Wiegold, J.: A bound on the Schur multiplier of a prime power group. Bull. Austral. Math.
%Soc. 60, 191–196 (1999)
%4. Green, J.A.: On the number of automorphisms of a finite group. Proc. R. Soc. A 237, 574–581 (1956)
%Ya.G. Berkovich, 'On the order of the commutator subgroup and Schur multiplier of a
%finite p-group', J. Algebra 144 (1991), 269-272. 
%[1] Y. Berkovich, Groups of Prime Power Order, Vol. 1 (de Gruyter, Berlin, 2008).
%[2] M. Davarpanah, M. R. R. Moghaddam, F. Saeedi and A. R. Salemkar, A remark on
%the Schur multiplier of p-groups, Commun. Algebra 35 (2007) 1215–1221.
%[3] B. Eick, Schur multiplicators of finite p-groups with fixed coclass, Israel J. Math. 166
%(2008) 157–166.
%[4] B. Eick, Computing p-groups with trivial Schur multiplicator, J. Algebra 322 (2009)
%741–751.
%[5] B. Eick and C. Leedham-Green, On the classification of prime-power groups by
%coclass, Bull. London Math. Soc. 40 (2008) 274–288.
%\bibitem{Ellis2} G. Ellis, On the Schur multiplier of p-groups, Commun. Algebra 7 (1999) 4173–4177.
\end{thebibliography}
\end{document}